\newtheorem{lemma}{Lemma}
\newtheorem{proposition}{Proposition}
\newtheorem{corollary}{Corollary}
\DeclareMathOperator{\ara}{ara}
\DeclareMathOperator{\pd}{pd}
\title{The arithmetical rank of a special class of monomial ideals}
\author{Pietro Mongelli}
\begin{document}
%\setcounter{page}{1}
%\pagenumbering{arabic}

\maketitle
\begin{center}{e-mail address: mongelli@mat.uniroma1.it}\end{center}
{\bf Keywords:} Arithmetical ranks, monomial ideals.\\
{\bf AMS 2010:} 13A15 (Primary); 13F55 (Secondary).\\\\\\
%\tableofcontents
Given a commutative noetherian ring with identity $R$, the arithmetical rank of an ideal $I$ of $R$, denoted by $\ara I$, is defined as the minimum number of elements that generate an ideal having the same radical as $I$. When $I$ is a square-free monomial ideal, then Lyubeznik \cite{Lyubeznik} proved that
\begin{equation}
\label{E:dis}
\pd(R/I)\le \ara I,
\end{equation}
where $\pd(R/I)$ denotes the projective dimension of the module $R/I$. 
In this paper we want to generalize result in \cite{Jing}, Theorem 4.5, verifing that for all $t,n$ positive integers the equality
\begin{equation}
\label{E:fond}
\ara(I_t(L_n))=\pd(I_t(L_n))
\end{equation}
holds, where
$$
I_t(L_n)=\{x_ix_{i+1}\cdots x_{i+t-1}\vert i=1,\dots, n-t+1\}.
$$

To prove \ref{E:fond}, we need the following lemma.

\begin{lemma}
\label{L:IncrementoVariabili}
Let $I, J$ be monomial ideals in the ring  $K[x_1,\dots,x_N]$. Suppose that each generator of $J$ is divisible by an indeterminate $x_i$ (not necessary the same for all monomials) such that no generators of $I$ is multiple of $x_i$. Then $\ara I\le \ara (I+ J)$.
\end{lemma}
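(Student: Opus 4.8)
The plan is to realize the passage from $I+J$ to $I$ by a substitution homomorphism that annihilates the ``extra'' part $J$ while leaving $I$ untouched, and then transport a radical generating set of $I+J$ through it. First I would define $T$ to be the set of all indeterminates $x_i$ that divide some element of the minimal monomial generating set of $J$ but divide no generator of $I$. By the hypothesis every generator of $J$ is a multiple of at least one variable in $T$ (the witnessing variable supplied by the assumption lies in $T$), while by construction no generator of $I$ is a multiple of any variable in $T$; in particular every minimal monomial generator of $I$ is a monomial in the complementary set of variables.

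Next I would introduce the $K$-algebra endomorphism $\phi\colon R\to R$, with $R=K[x_1,\dots,x_N]$, defined by $\phi(x_j)=0$ if $x_j\in T$ and $\phi(x_j)=x_j$ otherwise. The two facts I would record are: $\phi$ restricts to the identity on every polynomial not involving the variables of $T$ — so $\phi$ fixes each minimal generator of $I$, and more generally $\phi(I)\subseteq I$ — whereas $\phi(n)=0$ for every generator $n$ of $J$, hence $\phi(J)=0$.

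Now suppose $\ara(I+J)=s$ and choose $g_1,\dots,g_s\in R$ with $\sqrt{(g_1,\dots,g_s)}=\sqrt{I+J}$. I claim $\sqrt{(\phi(g_1),\dots,\phi(g_s))}=\sqrt I$, which gives $\ara I\le s=\ara(I+J)$. For the inclusion $\sqrt I\subseteq\sqrt{(\phi(g_1),\dots,\phi(g_s))}$: every generator $m$ of $I$ lies in $I+J\subseteq\sqrt{(g_1,\dots,g_s)}$, so $m^k\in(g_1,\dots,g_s)$ for some $k$; applying $\phi$ and using $\phi(m)=m$ gives $m^k\in(\phi(g_1),\dots,\phi(g_s))$, whence $m\in\sqrt{(\phi(g_1),\dots,\phi(g_s))}$. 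For the reverse inclusion: each $g_j\in\sqrt{I+J}$, so $g_j^k=a+b$ with $a\in I$, $b\in J$ for some $k$; applying $\phi$ yields $\phi(g_j)^k=\phi(a)+\phi(b)=\phi(a)\in\phi(I)\subseteq I$, so $\phi(g_j)\in\sqrt I$, and therefore $(\phi(g_1),\dots,\phi(g_s))\subseteq\sqrt I$.

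The argument is essentially bookkeeping once the right homomorphism is in place. The only point that genuinely needs care — and the one I would flag as the crux — is the choice of the set $T$ of witnessing variables and the verification that with this choice $\phi$ simultaneously kills $J$ and stabilizes $I$; this is exactly where the two halves of the hypothesis (each generator of $J$ divisible by some $x_i$, no generator of $I$ divisible by that $x_i$) are used. Everything else is routine manipulation of radicals together with the fact that $\phi$ is a ring homomorphism.
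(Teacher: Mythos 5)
Your proof is correct and is essentially the paper's argument in different clothing: the paper deletes from each $g_i$ all monomials divisible by the ``extra'' variables, which is exactly your substitution homomorphism $\phi$ sending those variables to $0$, and both proofs then transport the radical generating set of $I+J$ to one for $I$ in the same way. The only cosmetic difference is that you work directly with $\sqrt{I+J}$ and conclude $\phi(g_j)\in\sqrt I$, so you do not even need the paper's preliminary reduction to square-free generators.
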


\begin{proof}
Without loss of generality, we can assume that all monomials generating $I,J$ are square-free, thus $I,J$ are radical ideals.
We can also assume that $I$ is contained in $K[x_1,\dots,x_r]$ for some $r$ and that all monomials of $J$ are divisible by some $x_j$ with $j>r$.
Let $n=\ara (I+ J)$ and let $g_1,\dots,g_n$ be elements such that $\sqrt{(g_1,\dots,g_n)}=(I+ J)$. For all $i=1,\dots, n$ we set $f_i$ equals to $g_i$ without monomials divisible by $x_j$ for some $j>r$. We now show that $\sqrt{(f_1,\dots,f_n)}=I$.\\
Since $I,J$ are monomial ideals, obviously $f_i\in I$ for all $i\le n$. Let $h$ be an element of $I$, then $h\in I+ J$, so $h^m\in (g_1,\dots,g_n)$ for some integer $m$. Then, $h^m=\displaystyle\sum_{i=1}^n a_ig_i$ with  $a_i\in K[x_1,\dots,x_N]$. We know that $h\in K[x_1,\dots,x_r]$ so every linear combination with indeterminates $x_j$, $j>r$ must be $0$. Then $h^m=\displaystyle\sum_{i=1}^n b_if_i$ where each $b_i$ is obtained by $a_i$, deleting some monomials. Then $h\in \sqrt{(f_1,\dots,f_n)}$.
\end {proof}

Note that in general inequality $\ara I\le \ara (I+ J)$ is not true. In fact, let $I=(x_1x_2,x_1x_3,x_4x_5), J=(x_1)$ in $K[x_1,\dots,x_5]$. $I$ has $(x_2,x_3,x_4)$ as minimal prime ideal; this prime ideal has height $3$ and is known that this is a lower bound for the arithmetical rank of $I$. So $\ara I=3$, but $\ara (I+ J)=2$ since $I+ J=(x_1,x_4x_5)$.

We now present a result due to Barile \cite{Barile1996}.
\begin{proposition}
\label{P:caso semplice}
For all $n\ge 1$ let $I_n$ be the ideal of $K[x_1,\dots,x_{2n}]$ generated by the following monomials
\begin{equation*}
A=\{x_1\cdots x_{n}, x_2\cdots x_{n+1},\dots, x_{n+1}\cdots x_{2n}\}.
\end{equation*}
Then, $\ara A=2$.
\end{proposition}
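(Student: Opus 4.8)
The statement reduces to the two inequalities $\ara A\ge 2$ and $\ara A\le 2$, which I would handle separately.

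\emph{Lower bound.} Since $A$ is a square-free monomial ideal it is radical, and its $n+1$ generators all have degree $n$, so none of them divides another; hence $\{x_1\cdots x_n,\dots,x_{n+1}\cdots x_{2n}\}$ is the minimal monomial generating set of $A$, and as $n+1\ge 2$ the ideal $A$ is not principal. On the other hand, in the UFD $K[x_1,\dots,x_{2n}]$ the radical of a principal ideal $(f)$ is again principal — it is generated by the product of the distinct irreducible factors of $f$. Therefore $A$ is not the radical of any principal ideal, so $\ara A\ge 2$. (Equivalently one may invoke $\ara A\ge\operatorname{ht}A=2$, or \ref{E:dis} together with $\pd(R/A)\ge 2$.)

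\emph{Upper bound.} Write $m_i=x_ix_{i+1}\cdots x_{i+n-1}$, so that $A=(m_1,\dots,m_{n+1})$, and record the ladder of syzygies
\[
x_{i+n}\,m_i=x_i\,m_{i+1},\qquad i=1,\dots,n,
\]
(both sides equal $x_ix_{i+1}\cdots x_{i+n}$); these assemble into the bidiagonal $(n+1)\times n$ matrix with $(i,i)$-entry $x_{i+n}$ and $(i+1,i)$-entry $-x_i$, whose maximal minors are, up to sign, the $m_i$. I must exhibit $f_1,f_2\in A$ with $\sqrt{(f_1,f_2)}=A$, equivalently $V(f_1)\cap V(f_2)=V(A)$. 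For $n=2$ this is immediate from the lemma of Schmitt and Vogel applied with $P_0=\{m_2\}$, $P_1=\{m_1,m_3\}$: indeed $m_2\mid m_1m_3$, so one may take $f_1=m_2$, $f_2=m_1+m_3$. For $n\ge 3$ no single $m_k$ divides all the products $m_im_j$, so instead I would take $f_1=\sum_i a_im_i$ and $f_2=\sum_i b_im_i$, with the coefficients $a_i,b_i$ suitable monomials in the remaining variables (or distinct scalars when $K$ is infinite), arranged so that: (i) $f_1,f_2\in A$ automatically; and (ii) by multiplying $f_1,f_2$ by monomials and forming $R$-linear combinations, the ladder relations $x_{i+n}m_i=x_im_{i+1}$ allow one to cancel the generators one at a time and to produce, for each $j$, an element of $(f_1,f_2)$ of the shape $u\,m_j^{\,e}$ in which $u$ is a monomial not lying in the minimal prime of $A$ attached to $m_j$. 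From this $m_j\in\sqrt{(f_1,f_2)}$ for every $j$, and since also $(f_1,f_2)\subseteq A$ we get $\sqrt{(f_1,f_2)}=A$.

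The delicate point — the step I expect to be the main obstacle — is to tune the coefficients $a_i,b_i$ so that the cancellations along the ladder never manufacture a spurious factor, i.e. so that $V(f_1)$ and $V(f_2)$ meet exactly along $V(A)$ and nowhere else. It is convenient to organise the verification by localization at the two central variables: on the chart where $x_n$ and $x_{n+1}$ are invertible, $A$ localizes to an ideal of height $2$, so there $f_1$ and $f_2$ must genuinely interact and neither may be too sparse a combination; on the hyperplane $\{x_n=0\}$ only $m_{n+1}$ survives among the generators, and on $\{x_{n+1}=0\}$ only $m_1$ survives, which forces the restrictions of $f_1,f_2$ to those hyperplanes to be (up to units) powers of a single monomial. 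Producing one global pair $f_1,f_2$ that is simultaneously correct on every component of $V(A)$ is precisely where the very sparse, bidiagonal shape of $A$ is used — a generic determinantal ideal of maximal minors of an $(n+1)\times n$ matrix has arithmetical rank $n+1$, not $2$.
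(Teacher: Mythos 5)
Your lower bound is fine (non-principality of $A$ plus the fact that the radical of a principal ideal in a UFD is principal, or simply $\ara A\ge\operatorname{ht}A=2$), and the Schmitt--Vogel argument settles $n\le 2$. But for $n\ge 3$ — which is the whole content of the proposition — there is a genuine gap: you never exhibit the two polynomials $f_1,f_2$, and you never carry out the verification that $\sqrt{(f_1,f_2)}=A$. What you give is a programme (take combinations $\sum a_im_i$, $\sum b_im_i$, exploit the ladder syzygies $x_{i+n}m_i=x_im_{i+1}$ to isolate each $m_j$ up to a unit off its minimal primes, check the charts $x_n\ne 0$, $x_n=0$, $x_{n+1}=0$), and you yourself flag the decisive step — tuning the coefficients so the cancellations work simultaneously on every component of $V(A)$ — as an unresolved obstacle. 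Since no concrete choice of coefficients is proposed and no cancellation is actually performed, the upper bound $\ara A\le 2$ is not established; as it stands the argument proves the statement only for $n\le 2$.

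For comparison: the paper does not prove this proposition either, but quotes it as Proposition 3.1 of Barile (1996), where exactly the missing ingredient is supplied — an explicit pair of polynomials in the $m_i$ together with a direct verification that their common zero locus is $V(A)$. So your outline is pointed in the right direction (two well-chosen combinations of the generators, justified componentwise), and your remark that a generic $(n+1)\times n$ maximal-minor ideal has arithmetical rank $n+1$ correctly identifies why the special bidiagonal structure must be used; but to turn the sketch into a proof you would either have to reproduce Barile's explicit construction and computation, or cite her result as the paper does.
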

\begin{proof}
\cite {Barile1996}, Proposition 3.1.
\end{proof}

Now we can give a proof of (\ref{E:fond}).

\begin{corollary}
\label{C:path}
%va nelle premesse
\begin{equation*}
 \ara(R/I_t(L_n))=\left\{\begin{array}{ll}
 \frac{2(n-d)}{t+1} & \textrm{if } n\equiv d \textrm{ mod } (t+1) \text{ with } 0\le d\le t-1\\
 \frac{2n-(t-1)}{t+1} & \textrm{if } n\equiv t \textrm{ mod } (t+1).
 \end{array}\right.
 \end{equation*}
\end{corollary}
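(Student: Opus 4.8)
The plan is to establish matching upper and lower bounds for $\ara(I_t(L_n))$ whose common value is the displayed expression; since $\pd(R/I_t(L_n))\le\ara(I_t(L_n))$ by (\ref{E:dis}), this simultaneously proves (\ref{E:fond}). Write $g:=n-t+1$ for the number of generators $x_ix_{i+1}\cdots x_{i+t-1}$ of $I_t(L_n)$, and put $g=q(t+1)+r$ with $0\le r\le t$.

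\emph{Upper bound.} First I would prove the auxiliary fact that, for any $a$ and any $2\le m\le t+1$, the ideal $G_{a,m}$ generated by the $m$ consecutive ``interval'' monomials $x_a\cdots x_{a+t-1},\ x_{a+1}\cdots x_{a+t},\ \dots,\ x_{a+m-1}\cdots x_{a+m+t-2}$ satisfies $\ara G_{a,m}\le 2$ (with $\ara G_{a,1}=1$). For $m=t+1$ this is precisely Proposition~\ref{P:caso semplice} after a shift of the indeterminates. For $2\le m<t+1$, let $J$ be the ideal generated by the remaining $t+1-m$ interval monomials $x_{a+m}\cdots x_{a+m+t-1},\dots,x_{a+t}\cdots x_{a+2t-1}$; each of these is divisible by an indeterminate of index $>a+m+t-2$, whereas no generator of $G_{a,m}$ involves such an indeterminate, so Lemma~\ref{L:IncrementoVariabili} gives $\ara G_{a,m}\le\ara(G_{a,m}+J)=\ara G_{a,t+1}=2$. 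Next I would split the $g$ generators of $I_t(L_n)$ into $q$ consecutive blocks of $t+1$ generators each followed by a last block of $r$ generators, replace each block by the (at most two) polynomials just produced, and use the elementary observation that if $\sqrt{(p_1,\dots,p_s)}=\mathfrak a$ and $\sqrt{(q_1,\dots,q_u)}=\mathfrak b$ for monomial ideals $\mathfrak a,\mathfrak b$, then $\sqrt{(p_1,\dots,p_s,q_1,\dots,q_u)}=\mathfrak a+\mathfrak b$; applied across the blocks this shows the whole collection of polynomials has radical $I_t(L_n)$. Counting polynomials yields $\ara(I_t(L_n))\le 2q$ if $r=0$, $\le 2q+1$ if $r=1$, and $\le 2q+2$ if $2\le r\le t$, and a short computation rewriting $g=q(t+1)+r$ in terms of the residue of $n$ modulo $t+1$ shows this bound coincides in every case with the right-hand side of the statement.

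\emph{Lower bound.} By (\ref{E:dis}) it suffices to show $\pd(R/I_t(L_n))$ equals the stated expression. I would deduce this from the identity $\pd(R/I_t(L_n))=\operatorname{bight}(I_t(L_n))$, valid because $R/I_t(L_n)$ is sequentially Cohen--Macaulay (for $t=2$ this is the forest case; for general $t$ one invokes the analogous statement for path ideals of trees), and then compute the big height $\operatorname{bight}(I_t(L_n))$, i.e.\ the maximal size of a minimal vertex cover of the hypergraph whose edges are the $t$-element intervals $\{i,i+1,\dots,i+t-1\}$, $1\le i\le n-t+1$. Taking, in each successive window of $t+1$ vertices, all vertices but one gives an extremal minimal cover whose cardinality is exactly the claimed value. (Alternatively one may read off the projective dimension of $R/I_t(L_n)$ from the known Betti numbers of path ideals of paths, or obtain it by iterated mapping cones on the exact sequences coming from removing the last vertex of $L_n$.) Combining the two bounds gives $\pd(R/I_t(L_n))=\ara(I_t(L_n))$ equal to the displayed quantity, which is the corollary.

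\emph{Main obstacle.} The crux is the upper bound, and within it the reduction that $m$ consecutive interval monomials generate an ideal of arithmetical rank $\le 2$ for every $m\le t+1$: one must apply Lemma~\ref{L:IncrementoVariabili} with $G_{a,m}$ sitting inside the correct subset of indeterminates and verify that the ``fresh indeterminate'' hypothesis genuinely holds for \emph{every} generator of $J$ — this is exactly where the interval structure of the generators enters, and exactly what the counterexample following Lemma~\ref{L:IncrementoVariabili} warns against. The rest is bookkeeping: matching the block counts $2q,2q+1,2q+2$ against the two-line case distinction, and dispatching the boundary cases ($q=0$, and $t=1$, where $I_t(L_n)$ is the irrelevant maximal ideal). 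On the lower-bound side the only real alternative difficulty, if one prefers to avoid sequential Cohen--Macaulayness, is to show directly that $\pd(R/I_t(L_n))$ actually attains the big height rather than merely bounding it from below.
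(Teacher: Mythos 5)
Your upper bound is essentially the paper's argument: split the $n-t+1$ interval monomials into consecutive blocks of $t+1$, apply Proposition~\ref{P:caso semplice} to each full block, and handle a short block by adjoining the missing interval monomials (which involve fresh indeterminates of index larger than any appearing in the block) and invoking Lemma~\ref{L:IncrementoVariabili}. The paper does exactly this, the only cosmetic difference being that it applies Lemma~\ref{L:IncrementoVariabili} once to the whole ideal $I_t(L_n)+J$ instead of block by block; and your counts $2q$, $2q+1$, $2q+2$ do translate correctly into the two displayed formulas (with $q=k$, $r=0$ when $d=t-1$; $q=k$, $r=1$ when $d=t$; $q=k-1$, $r=d+2$ when $d<t-1$). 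For the lower bound the paper does nothing more than quote the projective dimension of $R/I_t(L_n)$ from \cite{Jing}, Theorem 4.1, and combine it with (\ref{E:dis}); since you explicitly allow citing the known projective dimension, that half of your plan is also in line with the paper.

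The one genuine error is in the lower-bound computation you actually sketch: taking ``all vertices but one in each successive window of $t+1$ vertices'' produces a set of roughly $tn/(t+1)$ vertices, not $2(n-d)/(t+1)$, and for $t\ge 3$ it is generally not even a minimal cover --- for $t=3$, $n=4$ the window is $\{x_1,x_2,x_3,x_4\}$ and the set $\{x_1,x_2,x_3\}$ properly contains the cover $\{x_3\}$, while the maximal minimal cover here has size $2$. The extremal minimal covers take \emph{two} vertices from each block of $t+1$ consecutive vertices (for instance the first and last vertex of each block, e.g.\ $\{x_1,x_4,x_5,x_8\}$ for $t=3$, $n=8$), which is why the answer is of order $2n/(t+1)$; your description agrees with this only when $t=2$. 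So if you insist on a self-contained lower bound via sequential Cohen--Macaulayness and $\operatorname{bight}$, you must replace your cover by the correct one and verify both its minimality and that no minimal cover is larger; otherwise the cleanest fix is to do what the paper does and simply cite the projective dimension computation of \cite{Jing}.
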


\begin {proof}
We can write $n=k(t+1)+d$ with $0\le d\le t$. We know that $I_t(L_n)$ consists of $n-t+1=k(t+1)+d-t+1$ monomials of the form $x_i\dots x_{i+t-1}$.
By previous proposition, the radical of the ideal  is the same if we substitute $t+1$ consecutive monomial by two opportune polynomials. For this reason, we subdivide these monomials in sets of $t+1$ elements.
First we consider the case $d=t-1$. Then we get exactly $k$ sets, so by previous proposition $\ara I_t(L_n)\le 2k=\frac{2(n-d)}{t+1}$. \\
Now suppose $d<t-1$. Let $J$  be the ideal generated by monomials $x_i\cdots x_{i+t-1}$ for $i=k(t+1)+d-t+2,\dots,k(t+1)$. The ideal $I_t(L_n)+J$ is in the case $d=t-1$ so $\ara (I_t(L_n)+J)\le 2k$. By Lemma \ref{L:IncrementoVariabili} we get $\ara I_t(L_n)\le 2k=\frac{2(n-d)}{t+1}$.\\
Finally suppose $d=t$, then there are $k(t+1)+1$ monomials. As in the previous cases, we can replace the first $k(t+1)$ elements with $2k$ polynomials, without change the radical of the ideal, so we get $2k+1$ elements. Therefore $\ara I_t(L_n)\le 2k+1=\frac{2n-(t-1)}{t+1}$. 
 
%Aggiungiamo all'ultimo gruppo tanti monomi in nuove indeterminate affinchè otteniamo la cardinalità $t+1$. Per il lemma, questa operazione non aumenta in rango aritmetico. Effettuando le sostituzioni otteniamo un numero di generatori a meno di radicale pari a 2 volte il numero di tali insiemi. Esplicitamente tale numero è l'intero arrotondato per eccesso del rapporto $\frac{n-t+1}{t+1}$. Esso vale $2k$ se $d-t+1\le 0$, ossia per $d\neq t$. In tal caso $2k=\frac{2(n-d}{t+1}$. Nel caso $d=t$, l'ultimo gruppo è costituito da un solo elemento: esso, insieme ai $2k$ elementi dei precedenti insiemi, genera a meno di radicale $I_t(L_n)$. Ma $\frac{2n-(t-1)}{t+1}=2k+1$ e ciò basta per concludere le stime superiori.
The claim follows immediately by projective dimension of $R/I_t(L_n)$ computated in \cite{Jing}, Theorem 4.1 and by \ref{E:dis}.
\end{proof}

\end{document}